\newtheorem{thm}{Theorem}[section]
\newtheorem{prop}[thm]{Proposition}
\newtheorem{cor}[thm]{Corollary}
\newtheorem{qst}[thm]{Question}
\theoremstyle{definition}
\newtheorem{dfn}[thm]{Definition}
\newtheorem{eg}[thm]{Example}
\theoremstyle{remark}
\newtheorem{rmk}[thm]{Remark}
\renewcommand{\phi}{\varphi}
\newcommand{\onto}{\twoheadrightarrow}
\renewcommand{\hat}{\protect\widehat}
\renewcommand{\i}[1]{\mathfrak{#1}}
\newcommand{\m}{\i{m}}
\newcommand{\depth}{\mathop{\mathrm{depth}}\nolimits}
\newcommand{\ann}{\mathop{\mathrm{Ann}}\nolimits}
\newcommand{\ass}{\mathop{\mathrm{Ass}}\nolimits}
\newcommand{\pd}{\mathop{\mathrm{pd}}\nolimits}
\newcommand{\dirlim}{\varinjlim}
\newcommand{\ext}{\mathop{\mathrm{Ext}}\nolimits}
\renewcommand{\*}{\bullet}
\newcommand{\lto}{\mathop{\longrightarrow\,}\limits}
\title[Ideals with Large Projective Dimension]{A Family of Ideals with Few Generators in Low Degree and Large Projective Dimension}
\author{Jason McCullough}
\address{Department of Mathematics, Univeristy of California - Riverside, 202 Surge Hall, 900 University Ave., Riverside, CA 92521}
\email{jmccullo@math.ucr.edu}
\subjclass[2000]{Primary: 13D05; Secondary: 13D02}
\keywords{projective dimension, homogeneous ideal, polynomial ring}
\begin{document}

\begin{abstract} Stillman posed a question as to whether the projective dimension of a homogeneous ideal $I$ in a polynomial ring over a field can be bounded by some formula depending only on the number and degrees of the minimal generators of $I$.  More recently, motivated by work on local cohomology modules in characteristic $p$, Zhang asked more specifically if the projective dimension of $I$ is bounded by the sum of the degrees of the generators.  We define a family of homogeneous ideals in a polynomial ring over a field of arbitrary characteristic whose projective dimension grows exponentially if the number and degrees of the generators are allowed to grow linearly.  We therefore answer Zhang's question in the negative and provide a lower bound to any answer to Stillman's question.  We also describe some explicit counterexamples to Zhang's question including an ideal generated by 7 quadrics with projective dimension 15.
\end{abstract}

\maketitle

\section{Introduction}

Throughout this paper, let $K$ be a field of arbitrary characteristic.  We consider the following question first raised by Stillman: 

\begin{qst}[Stillman, {\cite[Problem 3.14]{PS}}]\label{Q1} Is there a bound, independent of n, on the projective dimension of ideals in $R = K[X_1,\ldots,X_n]$ which are generated by $N$ homogeneous polynomials of given degrees $d_1,\ldots,d_N$?
\end{qst}

Equivalently, given a polynomial ring $R$ in arbitrarily many variables and a graded free resolution of the form
\[0\to F_t \to \cdots \to F_2 \to F_1 \to F_0 (= R) \to R/I \to 0,\]
does the module $F_1~=~\bigoplus_{i = 1}^N R(-d_i)$ determine a bound on the length of the resolution $t$?  Here $R(-d)$ denotes a rank-one free module over $R$ generated in degree $d$.  Only partial answers to Stillman's question are known.

More recently, Zhang proposed that the projective dimension of an ideal in a polynomial ring generated by $N$ elements of degrees $d_1,\ldots,d_N$ is bounded by the sum of the degrees $\sum_{i=1}^N d_i$.  In this paper we show that there are ideals with projective dimension far exceeding this bound and thus provide lower bounds on any possible answer to Stillman's question.  More precisely, we produce a family of ideals in $m + n \frac{(m + d - 2)!}{(m-1)!(d-1)!}$ variables with $n + m$ generators of degree $d$ and with projective dimension equal to the number of variables.

\section{Preliminaries and Background}

  We will use $A$ to denote an arbitrary Noetherian commutative ring with identity and reserve $R$ for a polynomial ring the field $K$.  We denote by $\m$ the graded maximal ideal of $R$.  For a module $M$ over $R$, we denote the projective dimension of $M$ by $\pd(M)$.  Given a homogeneous ideal $I$ of $R$, by the projective dimension of $I$ we mean $\pd(R/I)$.  We also denote the length of the maximal regular sequence on $M$ contained in $\m$ by $\depth(M)$.  

Stillman's question is partially motivated by the work done on three-generated ideals, that is, ideals generated by three homogeneous polynomials.  A construction of Burch \cite{Burch} in the local case, extended by Kohn \cite{Kohn} in the global case, shows that there exist three-generated ideals of arbitrarily large projective dimension.  However, if this construction is applied in the polynomial ring case, as the projective dimension grows, the degrees of the generators are forced to grow as well, thus motivating Stillman's question. More specifically, Engheta shows in \cite{Engheta05} that the degrees of the generators in Burch's construction for an ideal of projective dimension $n$ will have degree $\frac{n}{2}$.

In \cite{Bruns} Bruns proves a stronger result showing that three-generated ideals have resolutions not only of arbitrarily long length, but also matching any free resolution.  To be precise, Bruns shows that given a free resolution 
\[F_\*: \cdots \to F_4 \to F_3 \to F_2 \to F_1 \to F_0 \to M \to 0\]
 of a finitely generated module $M$ over a ring $A$, there exists free modules and maps
\[F_3 \lto^{f_3} F_2' \lto^{f_2} A^3 \lto^{f_1} A\]
such that the new complex
\[F_\*': \cdots \to F_4 \to F_3 \lto^{f_3} F_2' \lto^{f_2} A^3 \lto^{f_1} A \to A/I \to 0\]
is exact and thus is a free resolution of a three-generated ideal $I$.  Again, however, if one applies this construction to an arbitrary ideal, this comes at the expense of the degrees of the generators of the corresponding three-generated ideal.

Engheta \cite{Engheta07} studied the case of 3 cubics and showed that the projective dimension of an ideal generated by 3 cubics is at most 36.  This bound is likely not tight as the largest known projective dimension of an ideal generated by three cubics is five.  The first such example was found by Engheta \cite{Engheta10}.  In Section \ref{examples} we also construct a simple example of an ideal generated by three cubics with projective dimension five.

Further motivating the study of Question \ref{Q1} is the fact that it is equivalent to the existence of a bound on the Castelnuovo-Mumford regularity of an ideal in a polynomial ring based purely on the number and degrees of the minimal generators of $I$.  (See Problem 3.15 \cite{PS}.)  The equivalence was proved by Caviglia.  See \cite{Engheta05} for a nice explanation of this argument.  

In more recent work, Zhang \cite{Zhang} conjectured that an upper bound for the projective dimension of an ideal $I$ with $N$ generators in degrees $d_1,\ldots,d_N$ is simply $\sum_{i = 1}^N d_i$.  His work involved computations of local cohomology modules in characteristic $p > 0$ and applications of the Frobenius morphism.  Recall that given a module $M$ of $R$, the $j^{\text{th}}$ local cohomology module $H^j_I(M)$ of $M$ with respect to an ideal $I$ is defined to be
\[H_I^j(M) = \dirlim_t \ext_R^j(R/I^t,M)\]
where the direct limit is taken over maps $\ext_R^j(R/I^t,M) \to \ext_R^j(R/I^s,M)$ induced by the natural surjections $R/I^s \onto R/I^t$ for $s \ge t$.
  
Zhang proved the following result:

\begin{prop}[Zhang, {\cite[Proposition 3]{Zhang}}] Assume $I = (f_1,\ldots,f_N)$ is an ideal of $R = K[X_1,\ldots,X_n]$ (assumed to be of characteristic $p > 0$) such that $\sum \deg f_i < n$.  Then $H_\m^0(H_I^j(R)) = 0$ for every maximal ideal $\m$.
\end{prop}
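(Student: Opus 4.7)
The plan is to exploit the characteristic-$p$ hypothesis by replacing ordinary powers of $I$ with Frobenius powers $I^{[p^e]} := (f_1^{p^e}, \ldots, f_N^{p^e})$. These are cofinal with $\{I^t\}$ up to radical, so one has
\[H_I^j(R) \;=\; \dirlim_e \ext_R^j(R/I^{[p^e]}, R),\]
and $H_\m^0$ commutes with this filtered colimit. It therefore suffices to show that every $\m$-torsion class in any $\ext_R^j(R/I^{[p^e]}, R)$ eventually maps to zero in the direct system.

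A key preliminary observation is that $\sum \deg f_i < n$ together with $\deg f_i \ge 1$ forces $N < n$, so the \v Cech complex on $f_1, \ldots, f_N$ already gives $H_I^j(R) = 0$ for $j > N$; hence we may assume $j \le N < n$. In particular, no maximal ideal $\m$ is a minimal prime of $I^{[p^e]}$, so after localizing at $\m$ (which we may, since $(H_I^j(R))_\m = 0$ unless $I \subseteq \m$) we work in a regular local ring $R_\m$ of dimension $n$ with $\dim R_\m/I^{[p^e]}R_\m \ge 1$.

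In $R_\m$, local duality identifies the $\m$-torsion of $\ext_{R_\m}^j(R_\m/I^{[p^e]}R_\m, R_\m)$ with the socle of $H_{\m R_\m}^{n-j}(R_\m/I^{[p^e]}R_\m)$. The transition maps in the direct system are induced by the surjections $R/I^{[p^{e+1}]} \onto R/I^{[p^e]}$, and they factor through the flat Frobenius functor because the new generators are $p$-th powers of the old ones. I would use Kunz's flatness and the resulting dilation on socles to argue that any surviving socle class in the colimit would be forced to carry internal degree below a threshold determined by $n$ and $\sum \deg f_i$.

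The hard part is precisely that last step: turning the numerical inequality $\sum \deg f_i < n$ into a true vanishing rather than a mere depth bound. One cannot appeal to a projective-dimension bound on $R/I^{[p^e]}$, since such a bound is the content of Stillman's Question \ref{Q1}. I expect the argument to proceed by writing a candidate $\m$-torsion class as a \v Cech cocycle of the form $g/(f_1^{a_1} \cdots f_N^{a_N})$ and using the degree count to conclude that, for $e$ sufficiently large, $g$ must lie in $I^{[p^e]}$ plus boundaries, so that the class dies in the direct limit. The inequality $\sum \deg f_i < n$ is exactly what makes this degree window empty.
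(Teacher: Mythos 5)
This proposition is quoted from \cite[Proposition 3]{Zhang}; the paper itself supplies no proof, so there is no internal argument to compare yours against, and I can only judge the proposal on its own terms. As written it is a plan rather than a proof. Your reductions are sound and are the natural opening moves: the Frobenius powers $I^{[p^e]}$ are cofinal with the ordinary powers, $H^0_\m$ commutes with the filtered colimit, the complex built on the $N$ generators shows $H^j_I(R)=0$ for $j>N$ and $\sum\deg f_i<n$ forces $N<n$, and one may localize at $\m$. But the entire mathematical content of the proposition lies in the step you yourself flag as ``the hard part,'' and there you only record an intention (``I would use\dots,'' ``I expect the argument to proceed by\dots'') without carrying it out. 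Nothing in the proposal actually converts the numerical hypothesis $\sum\deg f_i<n$ into the vanishing of the colimit; that conversion \emph{is} the theorem, so the proof has a genuine gap at its center.

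Two further points would need repair even if you tried to execute the sketch. First, the statement concerns \emph{every} maximal ideal and does not assume the $f_i$ homogeneous, whereas your proposed mechanism (an ``internal degree below a threshold,'' a cocycle $g/(f_1^{a_1}\cdots f_N^{a_N})$ killed by a degree count) only makes sense for homogeneous generators at the graded maximal ideal; to reach arbitrary $\m$ one needs additional machinery, e.g.\ Lyubeznik's theory of $F$-finite $F$-modules, under which $H^0_\m(H^j_I(R))$ is a finite direct sum of copies of $E(R/\m)$ and $\ass H^j_I(R)\subseteq\ass\ext^j_R(R/I,R)$, reducing the problem to a statement about a single finitely generated $\ext$ module. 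Second, the duality step is imprecise: $H^0_\m\bigl(\ext^j_{R_\m}(M,R_\m)\bigr)$ is Matlis dual to the maximal finite-length \emph{quotient} of $H^{n-j}_{\m}(M)$, while the socle of $H^{n-j}_\m(M)$ is dual to $\ext^j_{R_\m}(M,R_\m)\otimes k$; these detect different phenomena, so ``the $\m$-torsion of $\ext^j$ is the socle of $H^{n-j}_\m$'' is not correct as stated. These secondary issues are fixable, but the central vanishing argument is absent, so the proposal does not establish the proposition.
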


This result led Zhang to ask the following stronger questions:

\begin{qst}[Zhang, {\cite[Question 4]{Zhang}}]\label{Q2} Let $R = K[X_1,\ldots,X_n]$ be the ring of polynomials such that $K$ is any field, and let $\m$ be any maximal ideal.  Assume $I = (f_1,\ldots,f_N)$ is an ideal of $R$ such that $\deg f_1 + \cdots + \deg f_N < n$.  Is $H^0_\m(R/I) = 0$?
\end{qst}

\begin{qst}[Zhang, \cite{Zhang}]\label{Q3} Given a homogeneous ideal $I$ in $R = K[X_1,\ldots,X_n]$ with $N$ generators of degrees $d_1,\ldots,d_N$, is
\[\pd(R/I) \le \sum_{i = 1}^N d_i?\]
\end{qst}

Zhang went on to show (\cite[Proposition 5]{Zhang}) that a positive answer to Question~\ref{Q2} would imply a positive answer to Question~\ref{Q3}.  Thus a positive answer to either question would also answer Stillman's question and provide an explicit upper bound.  In the next section, we show that this upper bound fails in general, thereby giving negative answers to Question~\ref{Q2} and Question~\ref{Q3}.

A similar question was considered by Caviglia and Kummini in \cite{CK}.  They answered a question of Huneke as to whether the projective dimension of an ideal $I$ is bounded by the size of the monomial support, that is, by the number of monomials that appear as terms in a minimal set of generators of $I$.  They construct a family of ideals whose projective dimension grows exponentially with the size of the monomial spread, thus answering Huneke's question in the negative.  Our paper is similar in spirit to that of Caviglia and Kummini, however none of the ideals in their construction exceed the bound in Zhang's question.  To be precise, for chosen $n_i \ge 2$ for each $1 \le i \le d$, their construction yields ideals with projective dimension $\prod_{i=1}^d n_i$ but with generators whose degrees sum to 
\[d \prod_{i=1}^d n_i - \sum_{i = 1}^{n-1} n_1\cdots\hat{n_i}\cdots n_d > \prod_{i=1}^d n_i.\]

\section{Definition of a Family of Ideals and the Main Result}

Set $M_{m,d} = \frac{(m + d - 1)!}{(m-1)!d!}$. Recall that $M_{m,d}$ is the number of monomials of degree $d$ in $m$ variables.   This formula follows by considering counting monomials of degree $d$ in $m$ variables as an ``$m$-choose-$d$-with-replacement'' problem.  We use this number to make the following definition.

\begin{dfn} Fix integers $m,n,d$ such that $m \ge 1$, $n \ge 0$ and $d \ge 2$. Order the $M_{m,d-1} = \frac{(m + d - 2)!}{(m-1)!(d-1)!}$ monomials of degree $d-1$ over the variables $X_1,\ldots,X_m$ and denote the $i^{\text{th}}$ such monomial by $Z_i$.  
Let $p = M_{m,d-1}$ and set $R = K[X_1,\ldots,X_m,Y_{1,1},\ldots,Y_{p,n}]$ be a polynomial ring in $m + pn$ variables over $K$.   
We define  $I_{m,n,d}$ to be the ideal generated by the following $m + n$ degreed $d$ homogeneous polynomials: 
\[\left\{X_i^d\,|\,1 \le i \le m\right\} \cup \left\{\sum_{j = 1}^{p} Z_jY_{j,k}\,|\,1\le k \le n\right\}.\]
\end{dfn}

\begin{eg}
If we set $m = 2$, $n = 2$ and $d = 3$ and let 
\[R = K[X_1,X_2,Y_{1,1},Y_{2,1},Y_{3,1}, Y_{1,2}, Y_{2,2},Y_{3,2}],\] \
then
\[I_{2,2,3} = \left(X_1^3,X_2^3,X_1^2Y_{1,1} + X_1X_2Y_{2,1} + X_2^2Y_{3,1},X_1^2Y_{1,2}+X_1X_2Y_{2,2} + X_2^2Y_{3,2}\right).\]
By the following result, $\pd(R/I_{2,2,3}) = 8$.
\end{eg}

\begin{thm}\label{T} Fix integers $m \ge 1$, $n \ge 0$ and $d \ge 1$ and let $p = M_{m,d-1}$.  Let $R = K[X_1,\ldots,X_m,Y_{1,1},\ldots,Y_{p, n}]$ and let $I = I_{m,n,d}$.  Then
\[\pd(R/I) = m + n p = m + n \frac{(m + d - 2)!}{(m-1)!(d-1)!}.\]
\end{thm}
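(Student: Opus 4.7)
My plan is to apply the Auslander--Buchsbaum formula. Since $R$ is a polynomial ring, every finitely generated $R$-module has finite projective dimension, so
\[\pd(R/I) = \depth(R) - \depth(R/I) = (m + np) - \depth(R/I).\]
The theorem therefore reduces to showing $\depth(R/I) = 0$, which---since $\m$ is the graded maximal ideal of $R$---amounts to $\m \in \ass(R/I)$: it suffices to exhibit an element $f \in R \setminus I$ with $\m \cdot f \subseteq I$.

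The socle candidate I would use is
\[\sigma := (X_1 X_2 \cdots X_m)^{d-1},\]
the top socle generator of the Artinian complete intersection $K[X_1,\ldots,X_m]/(X_1^d,\ldots,X_m^d)$. Nonmembership $\sigma \notin I$ is immediate from the surjection $\pi : R \onto K[X_1,\ldots,X_m]/(X_1^d,\ldots,X_m^d)$ sending each $Y_{j,k}$ to $0$: every generator of $I$ lies in $\ker \pi$, while $\pi(\sigma)$ is the nonzero socle generator of the target.

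The core of the proof is verifying $\m \cdot \sigma \subseteq I$. Multiplication by $X_i$ is trivial, as $X_i \sigma$ is divisible by $X_i^d$. For the $Y$-variables I would introduce dual monomials: writing $Z_j = X^{\beta(j)}$, set $Z_j^* := \sigma/Z_j = \prod_i X_i^{d-1-\beta(j)_i}$. Then $Z_j \cdot Z_j^* = \sigma$, and, because distinct multi-indices of the same total degree are not coordinatewise comparable, for $j \neq j_0$ some coordinate of $\beta(j)$ strictly exceeds the corresponding coordinate of $\beta(j_0)$, making $Z_j \cdot Z_{j_0}^*$ divisible by $X_i^d$ for that index $i$. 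Thus modulo $(X_1^d,\ldots,X_m^d) \subseteq I$,
\[Z_{j_0}^* \cdot g_k = \sum_{j=1}^{p} Z_{j_0}^* Z_j \, Y_{j,k} \equiv \sigma \, Y_{j_0, k},\]
whence $\sigma \, Y_{j_0, k} \in I$ for every $j_0, k$. This completes the check that $\m \cdot \sigma \subseteq I$, and Auslander--Buchsbaum finishes the theorem.

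The principal obstacle is locating the correct socle representative and establishing the orthogonality relation $Z_j \cdot Z_{j_0}^* \in (X_1^d, \ldots, X_m^d)$ for $j \neq j_0$; once this combinatorial identity is in hand, the rest of the argument is standard homological bookkeeping.
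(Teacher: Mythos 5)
Your proposal is correct and follows essentially the same route as the paper: reduce via Auslander--Buchsbaum to showing $\depth(R/I)=0$, then verify that $T=(X_1\cdots X_m)^{d-1}$ is a socle element by checking $X_iT\in I$ trivially and $Y_{j,k}T\in I$ via the observation that $Z_j\cdot(T/Z_{j_0})$ is divisible by some $X_i^d$ when $j\neq j_0$. Your justification that $T\notin I$ by projecting onto $K[X_1,\ldots,X_m]/(X_1^d,\ldots,X_m^d)$ is in fact slightly more explicit than the paper's.
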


\begin{proof} Let $\m$ be the graded maximal of R generated by all $m + np$ variables.  By the Auslander-Buchsbaum formula (see \cite[Theorem 19.9]{Eisenbud}), we have that 
\[\pd(R/I) = \depth(R) - \depth(R/I) = m + np - \depth_\m(R/I).\]
Thus proving that $\pd(R/I)$ has a maximal value of $m + np$ is equivalent to proving that $\depth(R/I) = 0$.  Thus it suffices to show that $\m \in \ass_R(R/I)$, which is equivalent to showing that $\m = \ann_{R/I}(T) = (I:_R T)$ for some $T \in R$. 

Let $T = \prod_{i = 1}^m X_i^{d - 1}$.  Note that $T \notin I$, and so $\m \supset (I:_{R} T)$.  It is clear that $T X_i \in I$ for $1 \le i \le m$.  

We now show that $T Y_{j,k} \in I$ for $1 \le j \le p$ and $1 \le k \le n$.
Note that the monomial $Z_i$ divides $T$ for every $1 \le j \le p$.  Write $S_j Z_j = T$ where $S_j$ is a monomial of degree $(d-1)^m - (d - 1)$. Thus we have modulo $I$:
\[ T Y_{j,k} = S_j Z_j Y_{j,k} = S_j \left(- \sum_{q \neq j} Z_q Y_{q,k} \right) = - \sum_{q \neq j} S_j Z_q Y_{q,k}.\]
For $m = 1$, this is an empty sum equaling $0$ and so clearly $TY_{j,k} \in I$.  If $m > 1$, observe that for $q \neq j$, $S_q Z_j$ must be divisible by $X_i^d$ for some $i$.  Therefore $T Y_{j,k} \in I$ for all $1 \le j \le p$ and $1 \le k \le n$.  
\end{proof}

\section{Examples and Asymptotic Growth of Projective Dimension}\label{examples}

In this section we describe explicitly some specific examples of the ideal construction from the previous section.  Later we consider the asymptotic growth of the projective dimension of a subfamily of the ideal $I_{m,n,d}$.

\begin{eg}[Three cubics with projective dimension 5]\label{example}
First we note that we recover another example of 3 cubics of projective dimension 5.  Let $R = K[X_1,X_2,Y_1,Y_2,Y_3]$ and let $I = I_{2,1,3}$.  (Since $n = 1$, we omit the second index on the $Y$-variables for simplicity.)  Then 
\[I = (X_1^3, X_2^3, X_1^2Y_1 + X_1X_2Y_2 + X_2^2Y_3).\]
  By the above theorem, 
\[\pd(R/I) = 2 + 1\left(M_{2,2}\right) = 2 + \frac{3!}{2!1!} = 2 + 3 = 5.\]
The first example of an ideal generated by 3 cubics with projective dimension 5 was found by Engheta \cite{Engheta10}.
\end{eg}

Since $M_{2,d}$ grows linearly with $d$, it takes at least 4 generators to find a counterexample to Zhang's question.  The following are three of the `smallest' counterexamples.

\begin{eg}[Seven quadrics with projective dimension 15] Let 
\[R = K[X_1,X_2,X_3,Y_{1,1},\ldots,Y_{3,4}],\]
 a polynomial ring with $15$ variables, and let $I = I_{3,4,2}$.   Then $I$ is generated by $7$ homogeneous polynomials of degree $2$; namely,
\[ I = (X_1^2,X_2^2,X_3^2,f_1,f_2,f_3,f_4)\]
where
\[
f_i = X_1Y_{1,i} + X_2Y_{2,i} + X_3Y_{3,i}
\]
for $i = 1$  to $4$.  Then Theorem~\ref{T}, $\pd(R/I) = 15$ while the sum of the degrees of the generators is $14$.
\end{eg}

\begin{eg}[Six cubics with projective dimension 20] Let 
\[R = K[X_1,\ldots,X_5,Y_1,\ldots,Y_{15}],\] a polynomial ring with $20$ variables, and let $I = I_{5,1,3}$.  (Again, since $n = 1$, we omit the second subscript on the $Y$ variables.)  Then $I$ is generated by $6$ homogeneous polynomials of degree $3$; namely,
\[ I = (X_1^3,X_2^3,X_3^3,X_4^3,X_5^3,f)\]
where $f$ is a cubic with $15$ terms.  Note that $I$ is an almost complete intersection.  Then by the Theorem \ref{T}, $\pd(R/I) = 20$.
\end{eg}

\begin{eg}[Four septics with projective dimension 31] Let 
\[R = K[X_1,X_2,X_3,Y_1,\ldots,Y_{28}],\]
and let $I = I_{3,1,7}$.    Then $I$ is generated by $X_1^7,X_2^7,X_3^7$ and one homogeneous polynomial of degree $7$ with $28$ terms.  By Theorem~\ref{T}, $\pd(R/I) = 31$.
\end{eg}

Note that as $m$, $n$, and $d$ grow, $\pd(R/I_{m,n,d})$ grows very quickly.  For instance, $I_{5,6,5}$ is an ideal with 11 generators in degree $5$ and has projective dimension 425.  Let us consider the case where the number of generators is fixed. When $m = 2$ and $n = 1$, we get that $\pd(R/I_{2,1,d}) = d + 2$.  Thus we produce ideals with three degree $d$ generators each with projective dimension $d + 2$.  Huneke and Eisenbud have shown (unpublished work) that the projective dimension of of an ideal generated by 3 quartics ($d = 2$ case) is at most $4$.  Engheta's example of three cubics with projective dimension $5$ (along with our Example~\ref{example}) is the largest known projective dimension of a three-generated ideal for the case $d = 3$.  The author does not know of any ideal with three degree $d$ generators with projective dimension greater than $d + 2$, so we ask the following question:

\begin{qst} Is the projective dimension of an ideal $I$ with 3 generators in degree $d$ bounded above by $d + 2$?
\end{qst}

More generally, for a fixed number of generators $N$ in degree $d$, our formula for the projective dimension of $I_{m,n,d}$ is a polynomial in $d$ of degree $N-2$, so we wonder:

\begin{qst} Is the projective dimension of an ideal $I$ with $N$ generators in degree $d$ bounded above by a polynomial in $d$ of degree $N - 2$?
\end{qst}

Finally, we consider the growth of $\pd(R/I_{n-1,1,n})$, where we allow both the number of generators and the degree to grow linearly.  Note that $I_{n-1,1,n}$ is an ideal with $n$ generators each in degree $n$.  By Theorem~\ref{T} its projective dimension is
\[\pd(R/I_{n-1,1,n}) = n-1 + \frac{(2n - 3)!}{(n-2)!(n-1)!} > \frac{2n-3}{n-2} \cdot\frac{2n-4}{n-3}\cdot\frac{2n-5}{n-4}\cdots\frac{n-2}{1} > 2^{n-2}.\]
We summarize this calculation in the following result:

\begin{cor} Any general upper bound for the projective dimension of an ideal with $N$ generators of degree $N$ must be at least $2^{N-2}$.
\end{cor}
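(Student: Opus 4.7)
The plan is to invoke Theorem~\ref{T} at a specific choice of parameters that yields an ideal with exactly $N$ generators all of degree $N$, and then to bound the resulting projective dimension from below by $2^{N-2}$ by elementary factorial manipulations.

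Concretely, take $m = N-1$, $n = 1$, and $d = N$ in the definition of $I_{m,n,d}$. The ideal $I_{N-1,1,N}$ then has $m+n = N$ generators, each of degree $d = N$. Applying Theorem~\ref{T} gives
\[\pd(R/I_{N-1,1,N}) = (N-1) + \frac{(2N-3)!}{(N-2)!\,(N-1)!}.\]
It therefore suffices to prove the inequality $\frac{(2N-3)!}{(N-2)!\,(N-1)!} \ge 2^{N-2}$.

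To do this, I would expand the binomial coefficient as the telescoping product
\[\frac{(2N-3)!}{(N-2)!\,(N-1)!} = \frac{(2N-3)(2N-4)\cdots N}{(N-2)(N-3)\cdots 1} = \prod_{j=0}^{N-3}\frac{2N-3-j}{N-2-j}\]
of $N-2$ rational factors. Each factor satisfies $\frac{2N-3-j}{N-2-j}\ge 2$, since this is equivalent to the trivial inequality $2N-3-j\ge 2(N-2-j)$, i.e.\ $j \ge -1$. Taking the product over all $N-2$ values of $j$ yields $\frac{(2N-3)!}{(N-2)!\,(N-1)!}\ge 2^{N-2}$, and combining this with the display above gives $\pd(R/I_{N-1,1,N}) \ge 2^{N-2}$, proving the corollary.

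There is essentially no obstacle here: the real content sits in Theorem~\ref{T}, and the corollary is a one-parameter specialization together with an elementary binomial estimate. The only care needed is to confirm that the telescoping product has the correct number of factors (namely $N-2$, indexed by $j = 0,\ldots,N-3$) and that each of them is at least $2$.
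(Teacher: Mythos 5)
Your proof is correct and is essentially the paper's own argument: specialize Theorem~\ref{T} to $I_{N-1,1,N}$ and bound $\frac{(2N-3)!}{(N-2)!\,(N-1)!}$ below by $2^{N-2}$ via the same telescoping product of $N-2$ factors, each at least $2$. (If anything, your indexing of the product is cleaner than the paper's displayed version, whose final factor contains a small typo.)
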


\begin{rmk} A finer analysis of the asymptotic growth of $\pd(R/I_{n-1,1,n})$ is possible using Stirling's approximation.  (See e.g. \cite{GKP}.)  One finds that $\pd(R/I_{n-1,1,n})$ grows asymptotically as $\frac{ 4^{n-1}}{2\sqrt{\pi (n-1)}}$.
\end{rmk}

\section{Acknowledgments}

The author would like to thank Bahman Engheta, Bart Snapp, and Daniel Zaharopol for useful conversations and David Rush and Louis Ratliff for reading an earlier draft of this paper.  Early examples of the ideals defined in this paper were found with computations in Macaulay2 \cite{M2}.

\bibliographystyle{amsalpha}
\bibliography{bib}

\end{document}